\theoremstyle{plain}
 \newtheorem{thm}{Theorem}[section]
 \newtheorem{cor}{Corollary}[section]
\theoremstyle{definition}
 \newtheorem{exm}{Example}[section]
 \newtheorem{dfn}{Definition}[section]
\theoremstyle{remark}
 \numberwithin{equation}{section}
\renewcommand{\leq}{\leqslant}
\renewcommand{\geq}{\geqslant}
\title[Statistical convergence in $g$-metric spaces]{Statistical convergence in $g$-metric spaces}
\subjclass[2010]{Primary 40A35 ; Secondary 40A05 and 54E35}
\keywords{Statistical convergence, Convergent sequence, Cauchy sequence, $G$-metric space}
\author{Rasoul Abazari} 
\address{ 
Department of Mathematics  \\ 
, Ardabil branch, Islamic Azad University  \\ 
Ardabil\\
Iran}
\email{r.abazari@iauardabil.ac.ir,  rasoolabazari@gmail.com}
\begin{document}

{\begin{flushleft}\baselineskip9pt\scriptsize

\end{flushleft}}
\vspace{18mm} \setcounter{page}{1} \thispagestyle{empty}

\begin{abstract}
The purpose of this paper is to define statistically convergent sequences with respect to the metrics on generalized metric spaces ($g$-metric spaces) and investigate basic properties of this statistical form of convergence.

\end{abstract}

\maketitle

\section{Introduction and Preliminaries}
There are numerous ways to generalize the notion of distance function \cite{Agarwal,M. A. Khamsi}. Among them, the concept of G-metric space that has been studied by Mustafa \cite{Z Mustafa B Sims} is a different generalization of the ordinary metric. Metrics in this space are distance between three points. For more generalization, Choi et al. \cite{H.Choi} introduced $g$-metric with degree $n$, that is a distance between $n+1$ points. In this paper, convergence of sequences from the topological properties of $g$-metric space will be discussed in terms of extending it to statistical forms.

The idea of statistical convergence was first expressed in 1935 by Zygmund \cite{A. Zygmund}. The formal concept of statistical convergence was introduced by Steinhaus \cite{H. Steinhaus} and Fast \cite{H. Fast} in 1951. Afterwards, Shoenberg reintroduced it in 1959 \cite{I.J. Schoenberg}.\\ Since then, the properties of statistical convergence have been studied by different mathematicians and applied in several area such as approximation theory \cite{O. Duman M K. Khan and C. Orhan}, trigonometric series \cite{A. Zygmund}, finitely additive set functions \cite{J. Connor and J. Kline}, Stone-Chech compactification \cite{J. Connor and M.A. Swardson}, Banach spaces \cite{J. Connor Kadets}, probability theory \cite{J.A. Fridy and M.K. Khan} and summability theory \cite{J. Connor,J.A. Fridy,J.A. Fridy and C. Orhan,E.Savas}.

The main goal of this paper is introducing the definition of statistically convergent sequence and studying its properties in $g$-metric spaces. 
First, lets bring some basic concepts that are needed in the rest of this paper.
\begin{dfn}
	\cite{Z Mustafa B Sims} Let $X$ be a nonempty set and Let $G:X\times X \times X \to \mathbb{R}^+$, be a function satisfying:
	\item[1)]
	$G(x,y,z)=0$ if $x=y=z$,
	\item[2)]
	$0<G(x,x,y);$ for all $x,y \in X$ , with $x\neq y$,
	\item[3)]
	$G(x,x,y)\leq G(x,y,z)$ , for all $x,y,z\in X$ with $z\neq y$,
	\item[4)]
	$G(x,y,z)=G(x,z,y)=G(y,z,x)=...$ , (symmetry in all three variables),
	\item[5)]
	$G(x,y,z)\leq G(x,a,a)+G(a,y,z)$, for all $x,y,z,a \in X$ , (rectangle inequality)
\end{dfn}
The function $G$ is called a generalized metric or $G$-metric on $X$, and the pair $(X,G)$ is a $G$-metric space.\\
The following definition is an extension of above space with degree $l$.

\begin{dfn}
\cite{H.Choi} Let $X$ be a nonempty set. A function $g:X^{l+1}\longrightarrow \mathbb{R}_+$ is called a $g$-metric with order $l$ on $X$ if it satisfies the following conditions:
\item[g1)]
$g(x_0, x_1, ..., x_l)=0 \quad \text{if and only if} x_0=x_1=...=x_l$,
\item[g2)]
$g(x_0, x_1, ..., x_l)=g(x_{\sigma(0)}, x_{\sigma(1)},...,  ...,x_{\sigma(l)})$ for permutation  $\sigma$ on $\{0, 1, ..., l\}$,
\item[g3)]
$g(x_0, x_1, ..., x_l)\leq g(y_0, y_1, ..., y_l)$ for all $(x_0, x_1, ..., x_l), (y_0, y_1, ..., y_l)\in X^{l+1}$ with $\{x_i: i=0, 1, ..., l\}\subseteq \{y_i: i=0, 1, ..., l\},$
\item[g4)]
For all $x_0, x_1, ..., x_s, y_0, y_1, ..., y_t, w\in X$ with $s+t+1=l$
$$g(x_0, x_1, ..., x_s, y_0, y_1, ..., y_t)\leq g(x_0, x_1, ..., x_s, w, w, ..., w)+g(y_0, y_1, ..., y_t, w, w, ..., w)$$
The pair $(X,g)$ is called a $g$-metric space. It is noteworthy that, if $l=1$ (resp. $l=2$), then it is equivalent to ordinary metric space (resp. $G$-metric space).
\end{dfn}
The following theorem will be needed in the main results.
\begin{thm}
\cite{H.Choi} Let $g$ be a $g$-metric with order $l$ on a nonempty set $X$. The followings are true:
\item[1)]
$g(\underbrace{x, ...,x}_\text{s times}, y, ..., y)\leq g(\underbrace{x, ...,x}_\text{s times}, \omega, ..., \omega)+ g(\underbrace{\omega, ...,\omega}_\text{s times}, y, ..., y)$,
\item[2)]
$g(x,y, ..., y)\leq g(x, \omega, ..., \omega)+g(\omega, y, ..., y)$
\item[3)]
$g(\underbrace{x, ...,x}_\text{s times}, \omega, ..., \omega)\leq sg(x, \omega, ..., \omega)$
and\\
\hspace{5 mm} $g(\underbrace{x, ...,x}_\text{s times}, \omega, ..., \omega)\leq (l+1-s)g(\omega, x, ..., x)$,
\item[4)]
$g(x_0, x_1, ..., x_l)\leq \sum_{i=0}^{n}g(x_i, \omega, ..., \omega),$
\item[5)]
$|g(y, x_1, ..., x_l)-g(\omega, x_1, ..., x_l)|\leq \max \{g(y, \omega, ..., \omega), g(\omega, y, ..., y)\}$,
\item[6)] 
$|g(\underbrace{x, ...,x}_\text{s times}, \omega, ..., \omega)-g(\underbrace{x, ...,x}_\text{s' times}, \omega, ..., \omega)|\leq |s-s'|g(x, \omega, ..., \omega),$

\item[7)]
$g(x, \omega, ..., \omega)\leq(1+(s-1)(l+1-s)g(\underbrace{x, ...,x}_\text{s times}, \omega, ..., \omega).$
\end{thm}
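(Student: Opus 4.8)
The plan is to fix the two points $x,\omega\in X$ and to track how the value of $g$ depends only on how many of its $l+1$ entries equal $x$. Write, for $0\le m\le l+1$,
\[
\phi(m)=g(\underbrace{x,\dots,x}_{m},\underbrace{\omega,\dots,\omega}_{l+1-m}),
\]
which is well defined by the symmetry axiom (g2); note that $\phi(0)=\phi(l+1)=0$ by (g1), that the left-hand side of the claim is $\phi(1)$, that the right-hand side involves $\phi(s)$, and that $g(\omega,x,\dots,x)=\phi(l)$. In this notation the assertion reads $\phi(1)\le\bigl(1+(s-1)(l+1-s)\bigr)\phi(s)$. I would prove it by splitting it into a \emph{decomposition} step and a \emph{tail} step, namely
\[
\phi(1)\le \phi(s)+(s-1)\,\phi(l)\qquad\text{and}\qquad \phi(l)\le (l+1-s)\,\phi(s),
\]
and then substituting the second into the first to produce exactly the coefficient $1+(s-1)(l+1-s)$.

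For the decomposition step I would iterate the rectangle inequality (g4) with auxiliary point $w=x$, peeling off a single $\omega$ at each stage. One application gives
\[
\phi(m)=g(\underbrace{x,\dots,x}_{m},\underbrace{\omega,\dots,\omega}_{l+1-m})\le g(\omega,\underbrace{x,\dots,x}_{l})+g(\underbrace{x,\dots,x}_{m+1},\underbrace{\omega,\dots,\omega}_{l-m})=\phi(l)+\phi(m+1),
\]
which is the building block behind the second inequality of item (3) of the theorem, except that I halt it early: summing $\phi(m)\le\phi(l)+\phi(m+1)$ for $m=1,\dots,s-1$ telescopes to $\phi(1)\le(s-1)\phi(l)+\phi(s)$, the decomposition step. (For $s=1$ this is vacuous and the claim is trivial, so I may assume $s\ge 2$.)

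The tail step, $g(\omega,x,\dots,x)=\phi(l)\le(l+1-s)\,\phi(s)$, is the one I expect to be the main obstacle. It cannot be reached by the rectangle inequality alone: peeling $x$'s out of $\phi(l)$ only yields $\phi(l)\le(l-s)\phi(1)+\phi(s)$, which would force the (generally false) estimate $\phi(1)\le\phi(s)$, and more fundamentally the rectangle inequality together with the boundary values $\phi(0)=\phi(l+1)=0$ does not by itself prevent the intermediate value $\phi(s)$ from dipping far below $\phi(l)$. The control that forbids such a dip is the monotonicity (ordering) axiom (g3), which compares $g$-values as the set of entries changes; this is therefore the step where I would invoke (g3), in tandem, if needed, with item (5) of the theorem, which bounds the effect of altering a single entry. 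Once the tail estimate is in hand, substitution into the decomposition gives $\phi(1)\le\phi(s)+(s-1)(l+1-s)\,\phi(s)$, which is the stated inequality.
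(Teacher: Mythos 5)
Your plan addresses only item (7) of the seven listed claims, and for that item there is nothing in the paper to compare against: Theorem 1.1 is quoted from \cite{H.Choi} without proof. So the proposal has to stand on its own, and it does not. The decomposition half is correct: applying (g4) to the split $(\underbrace{x,\dots,x}_{m},\omega,\dots,\omega\mid\omega)$ with auxiliary point $w=x$ does give $\phi(m)\le\phi(m+1)+\phi(l)$, and telescoping gives $\phi(1)\le\phi(s)+(s-1)\phi(l)$. But the tail step $\phi(l)\le(l+1-s)\phi(s)$, which you explicitly defer, is not a detail to be filled in later; it is the entire content of the claim, and it cannot be filled in the way you suggest.

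The difficulty is a dichotomy in how (g3) is read, and your proposal loses on both horns. Read (g3) literally as stated: $g(x_0,\dots,x_l)\le g(y_0,\dots,y_l)$ whenever $\{x_i\}\subseteq\{y_i\}$ as sets. For $1\le s\le l$ the tuples $(x,\omega,\dots,\omega)$ and $(\underbrace{x,\dots,x}_{s},\omega,\dots,\omega)$ have the \emph{same} entry set $\{x,\omega\}$, so (g3) yields $\phi(1)\le\phi(s)$ in one line --- item (7) with the better constant $1$, no decomposition or tail needed; note this flatly contradicts your parenthetical claim that $\phi(1)\le\phi(s)$ is ``generally false''. If instead you weaken (g3) to \emph{proper} containment --- the only reading under which $\phi(1)\le\phi(s)$ can fail, and evidently the one you have in mind --- then (g3) says nothing about configurations built from just two points, and both your tail step and item (7) itself become false: on $X=\{x,\omega\}$ with $l=3$, put $\phi(0)=\phi(4)=0$, $\phi(1)=3.5$, $\phi(2)=1$, $\phi(3)=3$. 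Every instance of (g1), (g2), (g4) holds (the binding ones reduce to $\phi(1)\le\phi(2)+\phi(3)$, $\phi(2)\le 2\min\{\phi(1),\phi(3)\}$, $\phi(3)\le\phi(1)+\phi(2)$), and items (1)--(6) of the theorem can be checked to hold as well, including your fallback item (5); yet for $s=2$ one has $\phi(3)=3>2=(l+1-s)\phi(2)$ and $\phi(1)=3.5>3=\bigl(1+(s-1)(l+1-s)\bigr)\phi(2)$. So the step you postponed is either immediate (making your whole scaffolding a detour) or unprovable (because the statement itself fails under that reading of the axioms). A genuine proof must begin by pinning down the exact form of (g3) and must place it, not (g4), at the center of the argument --- and it still owes proofs of items (1)--(6).
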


\begin{dfn}
Let $(X,g)$ be a $g$-metric space. Let $x\in X$ be a point and $\{x_k\}\subseteq X$ be a sequence.
\item[1)]
$\{x_k\}$ $g$-converges to $x$, if for all $\epsilon>0$ there exists $N\in \mathbb{N}$ such that 
$$i_1, ..., i_l\geq N\Longrightarrow g(x, x_1, ..., x_l)<\epsilon.$$

\item[2)]
$\{x_k\}$ is said to be $g$-Cauchy if for all $\epsilon>0$ there exists $N\in\mathbb{N}$ such that 
$$i_0, i_1, ..., i_l\geq N\Longrightarrow g(x_{i_0}, x_{i_1}, ..., x_{i_l})<\epsilon.$$

\item[3)]
$(X, g)$ is complete if every $g$-Cauchy sequence in $(X, g)$ is $g$-convergent in $(X, g)$.
\end{dfn}

\section{Main Results}
In this section, the definition of statistical convergence of sequences in $g$-metric spaces is introduced and some basic properties are studied. \\
For a set $K$ of positive integers, the asymptotic or natural density is defined as follows,
\begin{equation*}
	\delta(K)=\lim_n\frac{1}{n}|\{k\leq n : k\in K\}|,
\end{equation*}
where $|\{k\leq n : k\in K\}|$ denotes the number of elements of $K$ not exceeding $n$.

\begin{dfn}
		\cite{H. Fast} The sequence $\{x_k\}$ is said to be statistically convergent to $x$, if for every $\epsilon>0$,
	\begin{equation*}
		\lim_n\frac{1}{n}|\{k\leq n : |x_k-x|< \epsilon\}|=1.
	\end{equation*}
\end{dfn}

\begin{dfn}
The sequence $\{x_k\}$ is said to be statistical Cauchy sequence if for every $\epsilon>0$, there exists a number $N$ depending on $\epsilon$ such that,
	\begin{equation*}
	\lim_n\frac{1}{n}|\{k\leq n : |x_k-x_N|< \epsilon\}|=1.
\end{equation*}
\end{dfn}
For more information about properties of statistical convergence,  \cite{H. Fast,J.A. Fridy and C. Orhan,J.A. Fridy,O. Duman M K. Khan and C. Orhan} can be addressed. \\

Now, the main definitions of this paper are ready to be given.
\begin{dfn}
Let $l\in \mathbb{N}$, $A\in \mathbb{N}^l$ and 
$$A(n)=\{i_1, i_2, ..., i_l\leq n: (i_1, i_2, ..., i_l)\in \ A  \},$$ 
then 
\begin{equation*}
\delta_l(A):=\lim_{n\to \infty}\frac{l!}{n^l}|A(n)|,
\end{equation*}
is called {\em $l$-dimensional asymptotic (or natural) density} of the set $A$.
\end{dfn}

\begin{dfn}
Let $\{x_n\}$ be a sequence in a $g$-metric space $(X,g)$ such that 
$$g: X^{l+1}\longrightarrow \mathbb{R}^+$$ 
\item[i)]
$\{x_n\}$ statistically converges to $x$, if for all $\epsilon>0$,
\begin{equation*}
\lim_{n\to \infty}\frac{l!}{n^l}|\{(i_1, i_2, ..., i_l)\in \mathbb{N}^l:\quad i_1, i_2, ..., i_l\leq n,\quad g(x, x_{i_1}, x_{i_2},..., x_{i_l})<\epsilon\}|=1.
\end{equation*}
And denoted by; $gs-\lim_{n\to \infty} x_n=x$ or $x_n\xrightarrow{gs}x$\\

\item[ii)] 
$\{x_n\}$ is said to be statistical $g$-Cauchy, if for $\epsilon>0$, there exists   
$i_{\epsilon}\in \mathbb{N}$ such that 
\begin{equation*}
\lim_{n\to \infty}\frac{l!}{n^l}|\{(i_1, i_2, ..., i_l)\in \mathbb{N}^l:\quad i_1, i_2, ..., i_l\leq n,\quad g(x_{i_{\epsilon}}, x_{i_1}, x_{i_2},..., x_{i_l})<\epsilon\}|=1.
\end{equation*}
\end{dfn}

\begin{thm}
In g-metric spaces, every convergent sequence is statistically convergent.
\end{thm}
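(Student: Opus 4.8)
The plan is to reduce the statistical statement to the ordinary one by showing that the multi-indices violating the $\epsilon$-bound form a set of $l$-dimensional density zero. So I would begin by fixing $\epsilon>0$ and invoking the hypothesis that $\{x_n\}$ $g$-converges to $x$: this supplies an $N\in\mathbb{N}$ such that $g(x,x_{i_1},\ldots,x_{i_l})<\epsilon$ whenever $i_1,\ldots,i_l\geq N$. The crucial observation is the contrapositive: if a tuple $(i_1,\ldots,i_l)$ is \emph{bad}, meaning $g(x,x_{i_1},\ldots,x_{i_l})\geq\epsilon$, then at least one of its coordinates must be strictly smaller than $N$.

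Next I would estimate how many such bad tuples can lie in the cube $\{1,\ldots,n\}^l$. Writing $B(n)$ for this bad set, the previous observation gives $B(n)\subseteq\bigcup_{j=1}^{l}\{(i_1,\ldots,i_l)\leq n : i_j<N\}$, and a crude union bound yields $|B(n)|\leq l\,(N-1)\,n^{l-1}$. Hence $\frac{l!}{n^l}|B(n)|\leq \frac{l!\,l\,(N-1)}{n}\to 0$ as $n\to\infty$, so the bad set has $l$-dimensional density zero.

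Finally, since the set $A(n)$ of $\epsilon$-good tuples is the complement of $B(n)$ within the full index set, whose $l$-dimensional density equals one, I would conclude that $\lim_{n\to\infty}\frac{l!}{n^l}|A(n)|=1$, which is precisely $gs\text{-}\lim_{n\to\infty}x_n=x$. As $\epsilon>0$ was arbitrary, this holds for every $\epsilon$, completing the argument.

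The step I expect to be the main obstacle is the counting in the second paragraph: one must verify that the ``boundary slab'' of tuples having a bounded coordinate genuinely contributes only an $O(n^{l-1})$ term and therefore vanishes after multiplication by $l!/n^l$. Care is also needed with the normalizing constant $l!$ so that the complementary (good) set is seen to carry full density one rather than merely positive density; keeping the bookkeeping consistent with the convention used in the definition of $\delta_l$ is exactly where the estimate must be pinned down correctly.
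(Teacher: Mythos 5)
Your proposal is correct and rests on the same core fact as the paper's own proof: once $i_1,\ldots,i_l\geq N$ the tuple is good, so the exceptional tuples are confined to a slab of bounded width. The execution differs in a way worth noting. The paper bounds the good set from below, $|A(n)|\geq\binom{n-n_0}{l}$, and observes that $\frac{l!}{n^l}\binom{n-n_0}{l}\to 1$; you instead bound the bad set from above by the union bound $|B(n)|\leq l(N-1)n^{l-1}$ and pass to the complement. Your route makes the density-zero character of the exceptional set explicit (the formulation that generalizes to ``statistical convergence is convergence off an index set of density zero''), while the paper's binomial bound lands exactly on the value $1$ demanded by Definition 2.4 under its $\frac{l!}{n^l}$ normalization. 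That last point is precisely where your flagged worry about bookkeeping is genuine rather than cosmetic: if the index tuples are read as arbitrary ordered elements of $\{1,\ldots,n\}^l$, the full cube has $\frac{l!}{n^l}\,n^l=l!$, so your complement step would produce the limit $l!$ rather than $1$ whenever $l\geq 2$. The resolution --- which the paper itself uses silently, for instance when it writes $\lim_{n\to\infty}\frac{l!}{n^l}\binom{n}{l}=1$ for the full index set in the proof of Theorem 2.3 --- is to read the $l$-tuples as consisting of distinct (equivalently, strictly increasing) indices, so that the full set has $\binom{n}{l}$ elements and density $1$. Your union bound $|B(n)|\leq l(N-1)n^{l-1}$ survives this restriction verbatim, so with that convention made explicit your argument closes: $\frac{l!}{n^l}|A(n)|\geq\frac{l!}{n^l}\bigl(\binom{n}{l}-l(N-1)n^{l-1}\bigr)\to 1$.
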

\begin{proof}
Let $\{x_n\}$ be a sequence in g-metric space $(X,g)$ such that converges to $x$.  
For $\epsilon>0$ there exist $n_0\in\mathbb{N}$ such that for all $i_1, i_2, ..., i_l\geq n_0$, $$g(x, x_{i_1}, x_{i_2},..., x_{i_l})<\epsilon.$$
 Set 
$$A(n):=\{(i_1, i_2, ..., i_l)\in \mathbb{N}^l:  i_1, i_2, ..., i_l\leq n,\ g(x, x_{i_1}, x_{i_2},..., x_{i_l})<\epsilon\},$$
then
$$|A(n)|\geq \binom{n-n_0}{l},$$
and 
\begin{equation*}
\lim_{n\to\infty}\frac{l!|A(n)|}{n^l}\geq\lim_{n\to\infty}\frac{l!}{n^l} \binom{n-n_0}{l}=1,
\end{equation*}
so 
\begin{equation*}
gs-\lim_{n\to \infty} x_n=x.
\end{equation*}
\end{proof}
The following example shows that the converse of above theorem is not valid.
\begin{exm}
Let $X=\mathbb{R}$ and $g$ be the metric as follows;

$$g:\mathbb{R}^3\longrightarrow\mathbb{R}^+,$$
$$g(x,y,z)= max\{|x-y|, |x-z|, |y-z|\}.$$
Consider the following sequence,

\begin{equation*}
	x_k=\left\{
	\begin{array}{rl}
		k & \text{if $k$ is square} \quad \text {}    .\\
		0 & \quad\quad\text{o.w}
		
	\end{array} \right.
\end{equation*}
 It is clear that $\{x_k\}$ is statistically convergent while it is not convergent normally. 
\end{exm}
Following theorem shows that statistical limit in $g$-metric space is unique.
\begin{thm}
Let $\{x_n\}$ be a sequence in g-metric space $(X,g)$ such that\\ $x_n\xrightarrow{gs}x$ and $x_n\xrightarrow{gs}y$, then $x=y$.
\end{thm}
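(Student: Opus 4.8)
The plan is to prove directly that $g(x,y,\dots,y)=0$, where the left-hand side has one $x$ followed by $l$ copies of $y$; by axiom (g1) this equality forces $x=y$, which is exactly the claim. Fix $\epsilon>0$ and introduce the two index sets
$$A_\epsilon=\{(i_1,\dots,i_l):\ g(x,x_{i_1},\dots,x_{i_l})<\epsilon\},\qquad B_\epsilon=\{(i_1,\dots,i_l):\ g(y,x_{i_1},\dots,x_{i_l})<\epsilon\}.$$
By the two convergence hypotheses $x_n\xrightarrow{gs}x$ and $x_n\xrightarrow{gs}y$, both $A_\epsilon$ and $B_\epsilon$ have $l$-dimensional density $1$. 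The first step is to locate a single tuple lying in both sets.

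First I would invoke finite subadditivity of $\delta_l$: since $\delta_l(A_\epsilon)=\delta_l(B_\epsilon)=1$, the complements satisfy $\delta_l(A_\epsilon^{c})=\delta_l(B_\epsilon^{c})=0$, so $\delta_l(A_\epsilon^{c}\cup B_\epsilon^{c})=0$ and hence $\delta_l(A_\epsilon\cap B_\epsilon)=1$. In particular $A_\epsilon\cap B_\epsilon\neq\varnothing$, so I may fix one tuple $(i_1,\dots,i_l)$ for which simultaneously $g(x,x_{i_1},\dots,x_{i_l})<\epsilon$ and $g(y,x_{i_1},\dots,x_{i_l})<\epsilon$.

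Next comes the metric estimate. Put $\omega:=x_{i_1}$. The point is that statistical convergence only controls the generic configurations $g(x,x_{i_1},\dots,x_{i_l})$, whereas the available triangle inequality is phrased for single-repeated-point configurations; the monotonicity axiom (g3) bridges this. Indeed, since $\{x,\omega\}\subseteq\{x,x_{i_1},\dots,x_{i_l}\}$ and $\{\omega,y\}\subseteq\{y,x_{i_1},\dots,x_{i_l}\}$, axiom (g3) gives $g(x,\omega,\dots,\omega)\le g(x,x_{i_1},\dots,x_{i_l})<\epsilon$ and $g(\omega,y,\dots,y)\le g(y,x_{i_1},\dots,x_{i_l})<\epsilon$. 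Feeding these into the inequality $g(x,y,\dots,y)\le g(x,\omega,\dots,\omega)+g(\omega,y,\dots,y)$, which is part (2) of the preliminary theorem, yields $g(x,y,\dots,y)<2\epsilon$. As $\epsilon>0$ was arbitrary and $g(x,y,\dots,y)$ does not depend on $\epsilon$, we conclude $g(x,y,\dots,y)=0$, and (g1) gives $x=y$.

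I expect the main obstacle to be precisely the reduction in the last paragraph: statistical convergence furnishes smallness of the generic value $g(x,x_{i_1},\dots,x_{i_l})$, but the triangle inequality needs the reduced value $g(x,\omega,\dots,\omega)$, and it is the set-monotonicity (g3) that closes this gap. A secondary point requiring care is the density bookkeeping — one should confirm that the chosen normalization $\frac{l!}{n^l}$ assigns density $1$ to the full family of admissible tuples, so that ``density $1$'' is genuinely generic and subadditivity legitimately forces $A_\epsilon\cap B_\epsilon$ to be nonempty.
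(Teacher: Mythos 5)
Your proposal is correct and follows essentially the same route as the paper: both proofs locate a single tuple $(i_1,\dots,i_l)$ simultaneously close to $x$ and to $y$ via the density argument (the union of the two exceptional sets is null, equivalently the intersection of the two full sets is nonempty), and then bound $g(x,y,\dots,y)$ by an arbitrary multiple of $\epsilon$ using part (2) of Theorem 1.1 together with the monotonicity axiom (g3). Your estimate is in fact slightly cleaner than the paper's: by applying (g3) directly to $g(\omega,y,\dots,y)$ (whose value set $\{\omega,y\}$ already sits inside $\{y,x_{i_1},\dots,x_{i_l}\}$) you avoid the paper's detour through part (3) of Theorem 1.1, which introduces a factor of $l$ and forces the paper to work with thresholds $\epsilon/(2l)$ rather than $\epsilon$.
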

\begin{proof}
 For arbitrary $\epsilon>0$, set
$$A(\epsilon):=\{(i_1, i_2, ..., i_l)\in \mathbb{N}^l:\quad g(x, x_{i_1}, x_{i_2},..., x_{i_l})\geq\frac{\epsilon}{2l}\},$$
$$B(\epsilon):=\{(i_1, i_2, ..., i_l)\in \mathbb{N}^l:\quad g(y, x_{i_1}, x_{i_2},..., x_{i_l})\geq\frac{\epsilon}{2l}\},$$
Since $x_n\xrightarrow{gs}x$ and $x_n\xrightarrow{gs}y$,  therefore $\delta_l(A(\epsilon))=0$ and $\delta_l(B(\epsilon))=0$.\\
Let $C(\epsilon):=A(\epsilon)\cup B(\epsilon)$, then $\delta_l(C(\epsilon))=0$, hence $\delta_l(C^c(\epsilon))=1.$\\
Suppose $(i_1, i_2, ..., i_l)\in C^c(\epsilon)$, then by Theorem 1.1 we have; 
\begin{align*}
g(x, y, y,..., y)&\leq g(x, x_m, x_m, ..., x_m)+g(x_m, y,y,..., y)\\
 &\leq  g(x, x_m, x_m, ..., x_m)+l(g(y, x_m, x_m, ..., x_m))\\ 
 &\leq g(x, x_{i_1}, x_{i_2},..., x_{i_l})+lg(y, x_{i_1}, x_{i_2},..., x_{i_l})\\
 &\leq 
l(g(x, x_{i_1}, x_{i_2},..., x_{i_l})+g(y, x_{i_1}, x_{i_2},..., x_{i_l}))\\
&\leq l(\frac{\epsilon}{2l}+\frac{\epsilon}{2l})=\epsilon.
\end{align*}

Since $\epsilon>0$ is arbitrary, we get
$$g(x, y, y,..., y)=0,$$
therefore $x=y$.
\end{proof}
\begin{dfn}
A set $A=\{n_k: k\in \mathbb{N}\}$ is said to be statistically dense in $\mathbb{N}$, if the set 
$$A(n)=\{(i_1, i_2, ..., i_l)\in \mathbb{N}^l: i_j\in A\quad ,\quad i_1, i_2, ..., i_l\leq n \},$$ 
has asymptotic density 1. i.e.,
\begin{equation*}
\delta_l (A)= \lim _{n \to \infty}\frac{l!|A(n)|}{n^l}=1.
\end{equation*}

\end{dfn}
\begin{dfn}
A subsequence $\{x_{n_k}\}$ of a sequence $\{x_n\}$ in a $g$-metric space $(X, g)$ is statistically dense, if the index set $\{n_k: k\in \mathbb{N}\}$ is statistically dense subset of $\mathbb{N}$. i.e., \\
$$\delta_l (\{n_k; k\in \mathbb{N}\})=1.$$
\end{dfn}

Equivalency in the following theorem has been studied for cone metric space in \cite{K.Li}, now we prove it on $g$-metric space.
 
\begin{thm}
Let $\{x_n\}$ be a sequence in a $g$-metric space $(X, g)$. Then the followings are equivalent.
\item[1)]
$\{x_n\}$ is statistically convergent in $(X,g)$.

\item[2)]
There is a convergent sequence $\{y_n\}$ in $X$ such that $x_n=y_n$ for almost all $n\in \mathbb{N}$.

\item[3)]
There is a statistically dense subsequence $\{x_{n_k}\}$ of $\{x_n\}$ such that $\{x_{n_k}\}$ is convergent.

\item[4)]
There is a statistically dense subsequence $\{x_{n_k}\}$ of $\{x_n\}$ such that $\{x_{n_k}\}$ is statistically convergent.
\end{thm}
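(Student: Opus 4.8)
The plan is to prove the cycle $(1)\Rightarrow(2)\Rightarrow(3)\Rightarrow(4)\Rightarrow(1)$. Before entering the cycle I would isolate a reduction lemma that replaces every $l$-dimensional clause by a one-dimensional density statement about the diagonal quantities $g(x,x_n,\dots,x_n)$. Writing $K_\epsilon=\{n\in\mathbb{N}:g(x,x_n,\dots,x_n)\ge\epsilon\}$, the chain
$$g(x,x_{i_j},\dots,x_{i_j})\le g(x,x_{i_1},\dots,x_{i_l})\le l\sum_{j=1}^{l}g(x,x_{i_j},\dots,x_{i_j}),$$
valid for each fixed $j$ on the left (by condition g3, since $\{x,x_{i_j}\}\subseteq\{x,x_{i_1},\dots,x_{i_l}\}$) and overall on the right (by Theorem 1.1(4) with $\omega=x$, followed by Theorem 1.1(3)), shows that a tuple is $\epsilon$-bad as soon as one of its indices lies in $K_\epsilon$, and is $\epsilon$-good as soon as none of its indices lies in $K_{\epsilon/l^2}$. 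This sandwiches the $l$-dimensional bad-set count between expressions in the $1$-dimensional densities of the $K_\epsilon$, so that $x_n\xrightarrow{gs}x$ is equivalent to $\delta(K_\epsilon)=0$ for every $\epsilon>0$, and ordinary $g$-convergence to $x$ is equivalent to $g(x,x_n,\dots,x_n)\to0$. Every clause of the theorem thereby becomes a statement about a single real sequence.

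For $(1)\Rightarrow(2)$ I would run the standard Sal\'{a}t-type construction on the diagonal sequence. By the reduction lemma the sets $K_{1/j}$ have density $0$ and increase with $j$; choosing an increasing sequence $m_1<m_2<\cdots$ with $\frac1n|K_{1/j}\cap[1,n]|<\frac1j$ for all $n\ge m_j$, set $M=\mathbb{N}\setminus\bigcup_{j}\bigl(K_{1/j}\cap[m_j,m_{j+1})\bigr)$. A routine estimate gives $\delta(M)=1$, and along $M$ one has $g(x,x_n,\dots,x_n)\to0$. Defining $y_n=x_n$ for $n\in M$ and $y_n=x$ otherwise yields a sequence with $g(x,y_n,\dots,y_n)\to0$, hence (again via the reduction lemma) $\{y_n\}$ $g$-converges to $x$, while $x_n=y_n$ for all $n\in M$, i.e. for almost all $n$.

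The implications $(2)\Rightarrow(3)\Rightarrow(4)$ are short. If $x_n=y_n$ on a density-$1$ set $M=\{n_k\}$ with $\{y_n\}$ convergent, then $M$ is statistically dense and $\{x_{n_k}\}=\{y_{n_k}\}$ is a subsequence of a convergent sequence, hence convergent, giving $(3)$; and $(3)\Rightarrow(4)$ is immediate from Theorem 2.1 applied to the subsequence. For $(4)\Rightarrow(1)$, let $S=\{n_k\}$ be statistically dense with $\{x_{n_k}\}\xrightarrow{gs}x$. I would split the $\epsilon$-bad tuples of $\{x_n\}$ into those all of whose indices lie in $S$ and those with at least one index in $S^c$. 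The second family is contained in the set of tuples meeting $S^c$, which has $l$-dimensional density $0$ because $\delta(S^c)=0$; the first family has density $0$ by the statistical convergence of the subsequence, after using $\delta(S)=1$ (so that $n_k/k\to1$) to pass between the $k$-indexing of the subsequence and the $n$-indexing of $S$. Adding the two estimates gives density $0$ for the full bad set, which is $(1)$.

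The main obstacle is the pair of genuinely one-dimensional steps $(1)\Rightarrow(2)$ and $(4)\Rightarrow(1)$, together with the reduction lemma feeding them. The reduction lemma is where the $g$-metric structure must be tamed: unlike an ordinary metric, $g$ compares $l+1$ points simultaneously, so I must use precisely the inequalities of Theorem 1.1 to decouple the $l$ indices and reduce everything to the diagonal. In $(4)\Rightarrow(1)$ the delicate point is the book-keeping between the density of $S$ as an index set and the density with which the subsequence's bad tuples occupy $\mathbb{N}^l$; controlling this passage uses the statistical density of $S$ in an essential way and is the step most likely to conceal a subtlety.
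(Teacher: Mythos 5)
Your proposal is correct, but it follows a genuinely different route from the paper's. The paper proves the same cycle $(1)\Rightarrow(2)\Rightarrow(3)\Rightarrow(4)\Rightarrow(1)$, yet it never leaves the $l$-dimensional setting: for $(1)\Rightarrow(2)$ it builds $\{y_m\}$ blockwise, choosing an increasing sequence $\{n_k\}$ with
\begin{equation*}
\frac{l!}{n^l}\bigl|\{(i_1,\dots,i_l): i_1,\dots,i_l\leq n,\ g(x,x_{i_1},\dots,x_{i_l})<\tfrac{1}{2^k}\}\bigr|>1-\tfrac{1}{2^k}\quad (n>n_k),
\end{equation*}
and defining $y_m$ to be $x_m$ or $x$ according to membership of $m$ in these counting sets, then estimating the density of $\{(i_1,\dots,i_l): x_{i_j}\neq y_{i_j}\}$ directly; for $(4)\Rightarrow(1)$ it simply observes that the $\epsilon$-good tuples of $\{x_n\}$ contain the $\epsilon$-good tuples with all indices in $A=\{n_k\}$ and asserts that the latter set has density $1$. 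Your two structural innovations are (i) the diagonal reduction lemma, which uses exactly g3 and Theorem 1.1(3),(4) to show $gs$-convergence is equivalent to $\delta(K_\epsilon)=0$ for the one-dimensional sets $K_\epsilon=\{n: g(x,x_n,\dots,x_n)\geq\epsilon\}$, and (ii) the classical Sal\'{a}t construction run on that one-dimensional data. This buys real rigor at the two places where the paper is weakest: the paper's displayed definition of $y_m$ is garbled (its middle clause is an unparseable fragment of a density condition, not a property of $m$), and its $(4)\Rightarrow(1)$ step silently identifies the density of bad tuples in the $k$-indexing of the subsequence with the density of bad tuples in the $n$-indexing of $\mathbb{N}^l$ --- precisely the reindexing via $|A\cap[1,n]|/n\to 1$ that you carry out explicitly. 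The cost of your route is the extra lemma, but it is reusable (it makes Theorem 2.1, the uniqueness theorem, and Corollary 2.1 near-immediate). One point to keep consistent: the paper's normalization $\delta_l(A)=\lim l!|A(n)|/n^l$ is calibrated to counting increasing tuples (note its use of $\binom{n-n_0}{l}$ in Theorem 2.1), so your sandwich counts of tuples meeting or avoiding $K_\epsilon$ should be stated under that same convention; this is a bookkeeping issue inherited from the paper, not a gap in your argument.
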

\begin{proof}
$(1\Longrightarrow 2)$\\
Let $\epsilon>0$ and $\{x_n\}$ be a sequence that statistically converges to $x\in X$. i.e., 
\begin{equation*}
\lim_{n\to \infty}\frac{l!}{n^l}|\{(i_1, i_2, ..., i_l)\in \mathbb{N}^l: i_1, i_2, ..., i_l\leq n,\ g(x, x_{i_1}, x_{i_2},..., x_{i_l})<\epsilon\}|=1.
\end{equation*}
For every $k\in \mathbb{N}$, there exist $n_k\in \mathbb{N}$, such that for every $n>n_k$, 
\begin{equation*}
\frac{l!}{n^l}|\{(i_1, i_2, ..., i_l)\in \mathbb{N}^l: i_1, i_2, ..., i_l\leq n,\ g(x, x_{i_1}, x_{i_2},..., x_{i_l})<\frac{1}{2^k}\}|>1-\frac{1}{2^k}.
\end{equation*}
We can choose $\{n_k\}$ as an increasing sequence in $\mathbb{N}$.
Define $\{y_m\}$ as follows
\begin{equation*}
y_m=
\begin{cases}
x_m, \quad 1\leq m\leq n_1\\
x_m,  \quad  n_k< m\leq n_{k+1},\  i_1, i_2, ..., i_{l-1}\leq n_{k+1},\quad g(x, x_{i_1}, x_{i_2},..., x_{i_l})<\frac{1}{2^k}\}|>1-\frac{1}{2^k}\\
x,  \qquad \text{otherwise}
\end{cases}
\end{equation*}
Choose $k\in \mathbb{N}$ such that $\frac{1}{2^k}<\epsilon$. It is clear that $\{y_m\}$ converges to $x$.
Fix $n\in \mathbb{N}$, for $n_k<n\leq n_{k+1}$, we have,
\begin{align*}
\{(i_1, i_2, ..., i_l)&\in \mathbb{N}^l: i_1, i_2, ..., i_l\leq n ; x_{i_j}\neq y_{i_j}\}\\
&\subseteq \{(i_1, i_2, ..., i_l)\in \mathbb{N}^l: i_1, i_2, ..., i_l\leq n\}\\
&-\{(i_1, i_2, ..., i_l)\in \mathbb{N}^l: i_1, i_2, ..., i_l\leq n_k,\  g(x, x_{i_1}, x_{i_2},..., x_{i_l})<\frac{1}{2^k}\}.
\end{align*}
So
\begin{align*}
&\lim_{n \to \infty}\frac{l!}{n^l}|\{(i_1, i_2, ..., i_l)\in \mathbb{N}^l: i_1, i_2, ..., i_l\leq n ; x_{i_j}\neq y_{i_j}\}|\leq \lim_{n\to\infty}\frac{l!}{n^l}\binom{n}{l}\\
&-\lim_{n \to \infty}\frac{l!}{n^l}|\{(i_1, i_2, ..., i_l)\in\mathbb{N}^l: i_1, i_2, ..., i_l\leq n ;g(x, x_{i_1}, x_{i_2},..., x_{i_l})<\frac{1}{2^k}\}|\\
&\leq 1-(1-\frac{1}{2^k})=\frac{1}{2^k}<\epsilon.
\end{align*}
Hence 
$$\delta_l (\{(i_1, i_2, ..., i_l)\in \mathbb{N}^l: x_{i_j}\neq y_{i_j}\})=0 \qquad \text{\em (almost all)}$$

$(2\Longrightarrow3)$\\
Suppose that $\{y_n\}$ be a convergent sequence in $X$ such that $x_n=y_n$ for almost all $n\in \mathbb{N}$. Set $A=\{n\in \mathbb{N}: x_n=y_n\}$. Since $x_n=y_n$ for almost all $n$ , hence $\delta_l(A)=1$ and therefore  $\{y_n; n\in A \}$ is convergent and statistically dense subsequence of $\{x_n\}$.

$(3\Longrightarrow4)$\\
It is direct consequence of Theorem 2.1.

$(4\Longrightarrow1)$\\
Suppose $\{x_{n_k}\}$ be a statistically dense subsequence  of the sequence $\{x_n\}$ such that statistically converges to $x\in X$, i.e.,  $gs-\lim_{k\to\infty}x_{n_k}=x\in X$ and set $A=\{n_k; k\in\mathbb{N}\}$, then $\delta_l(A)=1$. For $\epsilon>0$ \\
$\{(i_1, i_2, ..., i_l)\in \mathbb{N}^l: i_1, i_2, ..., i_l\leq n,\ g(x, x_{i_1}, x_{i_2},..., x_{i_l})<\epsilon\}\\
\supseteq \{(i_1, i_2, ..., i_l)\in \mathbb{N}^l:  i_j\in A, i_1, i_2, ..., i_l\leq n,\ g(x, x_{i_1}, x_{i_2},..., x_{i_l})<\epsilon\}$,\\
and

\begin{align*}
&\lim_{n\to \infty}\frac{l!}{n^l}|\{(i_1, i_2, ..., i_l)\in \mathbb{N}^l: i_1, i_2, ..., i_l\leq n,\ g(x, x_{i_1}, x_{i_2},..., x_{i_l})<\epsilon\}|\\
&\geq  \lim_{n\to \infty}\frac{l!}{n^l}|\{(i_1, i_2, ..., i_l)\in \mathbb{N}^l: i_j\in A, i_1, i_2, ..., i_l\leq n,\ g(x, x_{i_1}, x_{i_2},..., x_{i_l})<\epsilon\}|\\
&=1.
\end{align*}

Hence $gs-\lim_{n\to \infty}x_n=x.$
\end{proof}
The following corollary is a direct consequence of Theorem 2.3.
\begin{cor}
In $g$-metric spaces, every statistically convergent sequence has a convergent subsequence .
\end{cor}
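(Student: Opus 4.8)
The plan is to read the corollary off directly from the implication $(1)\Longrightarrow(3)$ of Theorem 2.3, so that no fresh use of the $g$-metric axioms is required. First I would let $\{x_n\}$ be an arbitrary statistically convergent sequence in $(X,g)$; this is exactly statement $(1)$ of that theorem. Invoking the established equivalence, statement $(3)$ must then hold, which furnishes a statistically dense subsequence $\{x_{n_k}\}$ of $\{x_n\}$ that is convergent in the ordinary sense.

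The one point that deserves an explicit word is the passage from ``statistically dense subsequence'' to ``subsequence'' in the classical sense, i.e. that the index set $A=\{n_k:k\in\mathbb{N}\}$ is genuinely infinite. I would argue this from the defining property $\delta_l(A)=1$: if $A$ were finite with $|A|=m$, then the associated set $A(n)$ of $l$-tuples with entries in $A$ would satisfy $|A(n)|\le m^l$ for every $n$, whence $\tfrac{l!}{n^l}|A(n)|\to 0$, forcing $\delta_l(A)=0$ and contradicting density $1$. Thus $A$ is infinite, $\{x_{n_k}\}$ is a bona fide subsequence of $\{x_n\}$, and by construction it converges.

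I expect no real obstacle here, since all of the metric content has already been absorbed into the proof of Theorem 2.3; the corollary is essentially a repackaging of $(1)\Longrightarrow(3)$ together with the elementary observation above about density. The only care needed is to make clear that statistical density $1$ of the index set is what guarantees a legitimate (infinite) subsequence, rather than merely a convergent sub-family indexed by a possibly finite set.
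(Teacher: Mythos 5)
Your proposal is correct and is exactly the paper's intended argument: the paper states this corollary as a direct consequence of Theorem 2.3, namely the implication $(1)\Longrightarrow(3)$, which yields a convergent (statistically dense, hence infinite) subsequence. Your added check that $\delta_l(A)=1$ forces the index set to be infinite is a small but sound refinement of the same route.
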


\begin{thm}
Every statistically convergent sequence is statistically $g$-Cauchy.  
\end{thm}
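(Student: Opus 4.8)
The plan is to deduce statistical $g$-Cauchyness directly from statistical convergence: the reference index $i_\epsilon$ required by the definition will be harvested from the very density-one set that convergence supplies, and the quantity $g(x_{i_\epsilon}, x_{i_1}, \ldots, x_{i_l})$ will then be controlled by a single application of the rectangle inequality (property (g4)), with symmetry and monotonicity (properties (g2) and (g3)) handling the bookkeeping.

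First I would fix $\epsilon>0$ and, letting $x_n\xrightarrow{gs}x$, set
\[
A(\epsilon):=\{(i_1,\ldots,i_l)\in\mathbb{N}^l:\ g(x,x_{i_1},\ldots,x_{i_l})<\tfrac{\epsilon}{2}\},
\]
so that $\delta_l(A(\epsilon))=1$; in particular $A(\epsilon)\neq\varnothing$. I then extract a single good index by choosing any $(i_1^{*},\ldots,i_l^{*})\in A(\epsilon)$ and putting $i_\epsilon:=i_1^{*}$. Since the entry set $\{x,x_{i_\epsilon}\}$ of the tuple $(x,x_{i_\epsilon},\ldots,x_{i_\epsilon})$ is contained in $\{x,x_{i_1^{*}},\ldots,x_{i_l^{*}}\}$, property (g3) gives $g(x,x_{i_\epsilon},\ldots,x_{i_\epsilon})\le g(x,x_{i_1^{*}},\ldots,x_{i_l^{*}})<\tfrac{\epsilon}{2}$; and because tuples with identical entry sets have equal $g$-value (applying (g3) in both directions), also $g(x_{i_\epsilon},x,\ldots,x)<\tfrac{\epsilon}{2}$. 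This $i_\epsilon$ is fixed for the rest of the argument.

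The core step is the estimate for an arbitrary tuple in $A(\epsilon)$. For $(i_1,\ldots,i_l)\in A(\epsilon)$ I would apply the rectangle inequality (g4) with $s=0$, the single entry $x_{i_\epsilon}$, the block $x_{i_1},\ldots,x_{i_l}$, and $w=x$, yielding
\[
g(x_{i_\epsilon},x_{i_1},\ldots,x_{i_l})\le g(x_{i_\epsilon},\underbrace{x,\ldots,x}_{l})+g(x_{i_1},\ldots,x_{i_l},x).
\]
By (g2) the second summand equals $g(x,x_{i_1},\ldots,x_{i_l})<\tfrac{\epsilon}{2}$, while the first is $<\tfrac{\epsilon}{2}$ by the choice of $i_\epsilon$; hence $g(x_{i_\epsilon},x_{i_1},\ldots,x_{i_l})<\epsilon$. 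Thus $A(\epsilon)\subseteq B(\epsilon):=\{(i_1,\ldots,i_l):\ g(x_{i_\epsilon},x_{i_1},\ldots,x_{i_l})<\epsilon\}$, so $|A(\epsilon)(n)|\le|B(\epsilon)(n)|$ for every $n$ and therefore $1=\delta_l(A(\epsilon))\le\delta_l(B(\epsilon))\le 1$, giving $\delta_l(B(\epsilon))=1$. As $\epsilon>0$ was arbitrary, $\{x_n\}$ is statistically $g$-Cauchy.

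The only genuinely delicate point, and the place I expect to spend care, is passing from the \emph{multi-index} density-one set to a single admissible reference index $i_\epsilon$ and then matching the argument slots of (g4) (which entries carry $x$ versus $x_{i_\epsilon}$, and with what multiplicities), a matching that is resolved cleanly by the symmetry (g2) and the equal-entry-set consequence of (g3). The concluding density comparison is routine, since we use only set containment and the monotonicity of $\delta_l$, so the $l!/n^l$ normalization causes no difficulty.
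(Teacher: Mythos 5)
Your proof is correct and reaches the same set-inclusion endgame as the paper ($A(\epsilon)\subseteq B(\epsilon)$ plus monotonicity of $\delta_l$), but the triangle-type estimate at its core is genuinely different and, in fact, cleaner. The paper bounds $g(x_{i_\epsilon},x_{i_1},\ldots,x_{i_l})$ by the $(l+1)$-term polygon inequality of Theorem 1.1(4) centered at $x$, then flips each summand $g(x_{i_k},x,\ldots,x)$ into $l\,g(x,x_{i_k},\ldots,x_{i_k})$ via parts (3)/(7), and finally applies monotonicity (g3) to each diagonal term; this costs the threshold $\epsilon/(l(l+1))$. You instead apply the rectangle axiom (g4) once, with $s=0$ and $w=x$, splitting into just two terms $g(x_{i_\epsilon},x,\ldots,x)+g(x,x_{i_1},\ldots,x_{i_l})$, so the threshold $\epsilon/2$ suffices and no $l$-dependent constant appears. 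You also do something the paper skips entirely: you construct the reference index $i_\epsilon$ explicitly, by picking any tuple in the nonempty density-one set $A(\epsilon)$ and using (g3) to transfer smallness to $g(x,x_{i_\epsilon},\ldots,x_{i_\epsilon})$; in the paper's proof $i_\epsilon$ appears out of nowhere, so your version actually repairs a gap. One caveat: your claim that tuples with identical entry sets have equal $g$-value does follow from (g3) exactly as stated in this paper, but the paper's own Theorem 1.1 (e.g. part (5), which distinguishes $g(y,\omega,\ldots,\omega)$ from $g(\omega,y,\ldots,y)$) suggests the authors intend a weaker reading of (g3) under which that equality is not available; if so, replace it by Theorem 1.1(3) with $s=1$, i.e. $g(x_{i_\epsilon},x,\ldots,x)\le l\,g(x,x_{i_\epsilon},\ldots,x_{i_\epsilon})$, and run your argument with threshold $\epsilon/(2l)$ --- the structure is unchanged.
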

\begin{proof}
Let $\{x_n\}$ be a statistical $g$-Cauchy sequence in $g$-metric space $(X,g)$ and  $\epsilon>0$, then,
\begin{equation*}
\lim_{n\to \infty}\frac{l!}{n^l}|\{(i_1, i_2, ..., i_l)\in \mathbb{N}^l: i_1, i_2, ..., i_l\leq n,\ g(x, x_{i_1}, x_{i_2},..., x_{i_l})<\frac{\epsilon}{l(l+1)}\}|=1
\end{equation*} 
By the monotonicity condition for the $g$-metric and parts (4,7) of Theorem 1.1, it follows that,
\begin{equation*}
g(x_{i_{\epsilon}}, x_{i_1}, x_{i_2},..., x_{i_l})\leq\sum_{k=0}^{l}g(x_{i_k}, x,..., x)\leq \sum_{k=0}^{l}lg(x, x_{i_k}..., x_{i_k}).
\end{equation*}
So 
	\begin{align*}
	&\{(i_1, i_2, ..., i_l)\in \mathbb{N}^l: i_1, i_2, ..., i_l\leq n,\ g(x, x_{i_1}, x_{i_2},..., x_{i_l})<\frac{\epsilon}{l(l+1)}\}\\ 
	&\subseteq
	\{(i_1, i_2, ..., i_l)\in \mathbb{N}^l: i_1, i_2, ..., i_l\leq n,\ g(x_{i_\epsilon}, x_{i_1}, x_{i_2},..., x_{i_l})<\epsilon\}.
	\end{align*}
Therefore
\begin{equation*}
\lim_{n\to\infty}\frac{l!}{n^l}|\{(i_1, i_2, ..., i_l)\in \mathbb{N}^l: i_1, i_2, ..., i_l\leq n,\ g(x_{i_\epsilon}, x_{i_1}, x_{i_2},..., x_{i_l})<\epsilon\}|=1.
\end{equation*}
Thus, $\{x_n\}$ is a statistical $g$-Cauchy sequence in $(X, g)$.
\end{proof}
\begin{dfn}
Let $(X,g)$ be a $g$-metric space, if every statistically Cauchy sequence be statistically convergent, then $(X,g)$ is said to be {\em statistically complete}.
\end{dfn}
\begin{cor}
Every statistically complete $g$-metric space is complete.
\end{cor}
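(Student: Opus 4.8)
The plan is to take an arbitrary $g$-Cauchy sequence $\{x_n\}$ in $(X,g)$ and manufacture a genuine $g$-limit, using statistical completeness only as an intermediate device. The first observation is that every $g$-Cauchy sequence is automatically statistically $g$-Cauchy. Indeed, given $\epsilon>0$ pick $N$ with $g(x_{i_0},x_{i_1},\dots,x_{i_l})<\epsilon$ whenever $i_0,i_1,\dots,i_l\geq N$, and set $i_\epsilon=N$. Then the set of tuples $(i_1,\dots,i_l)$ with $g(x_{i_\epsilon},x_{i_1},\dots,x_{i_l})<\epsilon$ contains every tuple whose entries exceed $N$, so its counting function is at least $\binom{n-N}{l}$; since $\frac{l!}{n^l}\binom{n-N}{l}\to 1$ (exactly as in the proof of Theorem 2.1), the required density equals $1$. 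Hence $\{x_n\}$ is statistically $g$-Cauchy.

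By the hypothesis of statistical completeness, $\{x_n\}$ is therefore statistically convergent, say $x_n\xrightarrow{gs}x$ for some $x\in X$. Next I would invoke Theorem 2.3, implication $(1)\Rightarrow(3)$, to extract a statistically dense subsequence $\{x_{n_k}\}$ that converges in the ordinary $g$-sense. By the construction in that theorem this subsequence converges precisely to the statistical limit $x$, so $x_{n_k}\to x$. (Corollary 2.1 alone only yields \emph{some} convergent subsequence; Theorem 2.3 is what pins its limit to $x$.)

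The remaining, and principal, step is the classical fact that a Cauchy sequence with a subsequence converging to $x$ must itself converge to $x$, now executed with the rectangle inequality g4 replacing the triangle inequality. Given $\epsilon>0$, choose $N_1$ with $g(x_{i_0},\dots,x_{i_l})<\epsilon/2$ for all indices $\geq N_1$, and pick a subsequence index $m=n_k\geq N_1$ large enough that $g(x,x_m,\dots,x_m)<\epsilon/2$. Applying g4 to $g(x,x_{i_1},\dots,x_{i_l})$ with the single argument $x$ (so $s=0$) against the block $x_{i_1},\dots,x_{i_l}$ (so $t=l-1$) and inserting $w=x_m$ yields
$$g(x,x_{i_1},\dots,x_{i_l})\leq g(x,x_m,\dots,x_m)+g(x_m,x_{i_1},\dots,x_{i_l}).$$
Here the first summand is $<\epsilon/2$ by the choice of $m$, and the second is $<\epsilon/2$ by the $g$-Cauchy property (taking $i_0=m$) whenever $i_1,\dots,i_l\geq N_1$. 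Thus $g(x,x_{i_1},\dots,x_{i_l})<\epsilon$ for all $i_1,\dots,i_l\geq N_1$, i.e. $\{x_n\}$ $g$-converges to $x$, which establishes completeness.

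I expect the genuine obstacle to be exactly this last step: one must identify the correct splitting of the $l+1$ arguments in g4 and verify that the single subsequence term $x_m$, used as the repeated insertion point $w$, simultaneously controls both summands. The preceding steps — Cauchy $\Rightarrow$ statistically Cauchy, the appeal to statistical completeness, and the extraction of the subsequence via Theorem 2.3 — are essentially bookkeeping.
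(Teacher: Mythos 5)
Your proof is correct, and its skeleton --- $g$-Cauchy $\Rightarrow$ statistically $g$-Cauchy, statistical completeness to obtain a statistical limit, extraction of a convergent subsequence, then Cauchy-plus-convergent-subsequence $\Rightarrow$ convergence of the whole sequence --- is exactly the paper's. Two differences are worth recording. First, you actually prove the step ``$g$-Cauchy implies statistically $g$-Cauchy'' by the $\binom{n-N}{l}$ counting argument; the paper merely asserts it (its Theorem 2.4 only covers statistically convergent $\Rightarrow$ statistically $g$-Cauchy, so your explicit argument fills a small but real gap). Second, your closing estimate uses a genuinely different decomposition: a single application of axiom g4 with the split $\{x\}\cup\{x_{i_1},\dots,x_{i_l}\}$ and insertion point $w=x_m$ gives
$g(x,x_{i_1},\dots,x_{i_l})\leq g(x,x_m,\dots,x_m)+g(x_m,x_{i_1},\dots,x_{i_l})<\tfrac{\epsilon}{2}+\tfrac{\epsilon}{2}$,
whereas the paper first applies Theorem 1.1(4) around $\omega=x_{i_\epsilon}$ to produce an $(l+1)$-term sum, then compares with the convergent subsequence via a further rectangle step and part (3), which forces the fussier modulus $\epsilon/(2l(l+1))$ in the Cauchy condition. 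Your route reaches the same conclusion while avoiding all $l$-dependent constants, which is a genuine simplification. Finally, your care in pinning the subsequence's limit to the statistical limit $x$ by tracing the construction inside Theorem 2.3 is sound but unnecessary: the final step works for any convergent subsequence and whatever its limit happens to be, which is all that the paper's Corollary 2.1 supplies.
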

\begin{proof}
Let $(X,g)$ be a statistically complete $g$-metric space. Suppose $\{x_n\}$ be a Cauchy sequence in $(X,g)$, then it is statistically Cauchy sequence in $(X,g)$. Since $(X,g)$ is statistically complete so  $\{x_n\}$ is statistically convergent. By corollary 2.1 there is a subsequence  $\{x_{n_k}\}$ of  $\{x_n\}$ that converges to a point $x\in X$.\\
Since $\{x_n\}$ is Cauchy, hence, for $\epsilon >0$, there exist $N\in \mathbb{N}$ and $x_{i_{\epsilon}}\in \{x_n\}$ such that for $i_1, i_2, ...,i_l\geq N$ we have,
$$g(x_{i_{\epsilon}}, x_{i_1}, x_{i_2}, ..., x_{i_l})<\frac{\epsilon}{2l(l+1))}$$
On the other hands,  $\{x_{n_k}\}$ converges to $x$, Hence there exists $k_0\geq N$ such that for  $i_1, i_2, ...,i_l\geq k_0$,
$$g(x, x_{i_{n_1}}, x_{i_{n_2}}, ..., x_{i_{n_l}})<\frac{\epsilon}{2}.$$
For $i_1, i_2, ...,i_l\geq N$ and applying parts (3) and (4) of Theorem 1.1, it follows that,

\begin{align*}
g(x, x_{i_1}, x_{i_2}, ..., x_{i_l})&\leq g(x, x_{i_{\epsilon}}, x_{i_{\epsilon}},..., x_{i_{\epsilon}})+\sum_{j=1}^{l}g(x_{i_j}, x_{i_{\epsilon}}, x_{i_{\epsilon}},..., x_{i_{\epsilon}})\\
&\leq g(x, x_{n_{i_1}}, x_{n_{i_1}}, ..., x_{n_{i_1}})+l(g(x_{i_{\epsilon}}, x_{n_{i_1}}, x_{n_{i_1}}, ..., x_{n_{i_1}}))\\
&+\sum_{j=1}^{l}lg(x_{i_{\epsilon}},x_{i_j}, x_{i_j}, ..., x_{i_j})\\
&<\frac{\epsilon}{2}+l(\frac{\epsilon}{2l(l+1)})+l^2(\frac{\epsilon}{2l(l+1)})=\epsilon.
\end{align*}
\end{proof}

\section{Conclusion}
Recall that $G$-metric space is a special case of $g$-metric space with degree $l=2$. So, the main definitions and results of statistical forms of convergence for sequences was preferred to be applied for $g$-metric spaces. In this case, all results that have been discussed in this paper can be used for $G$-metric spaces specially.\\
On the other hand, if $l$ takes 1 in the definitions of statistical notion of convergence introduced in this paper, then these definitions exactly coincide with the statistical forms in ordinary metric spaces. Therefore, it can be said that this study is more comprehensive in terms of definitions and results.\\

\bibliographystyle{amsplain}

\end{document}